\newtheorem{theorem}{Theorem}[section]
\newtheorem{proposition}{Proposition}[section]
\newenvironment{proof}[1][Proof]{\noindent \textbf{#1.} }{\ \ \  $\Box$}
\newtheorem{definition}{Definition}[section]
\title{$L^{p}$ solutions of backward stochastic Volterra integral equations \thanks{This work is supported by National
Natural Science Foundation of China Grant 10771122, Natural Science
Foundation of Shandong Province of China Grant Y2006A08 and National
Basic Research Program of China (973 Program, No. 2007CB814900).}}
\date{December 14 2009}
\author{Tianxiao Wang \thanks{Corresponding author, E-mail:xiaotian2008001@gmail.com}\\ \small{School of
Mathematics, Shandong University, Jinan 250100, China}}
\begin{document}

\maketitle

\begin{abstract}
This paper is devoted to the unique solvability of backward
stochastic Volterra integral equations (BSVIEs for short), in terms
of both M-solution introduced in \cite{Y3} and the adapted solutions
in \cite{L}, \cite{WS}. We prove the existence and uniqueness of
M-solutions of BSVIEs in $L^p$ $(1<p<2)$, which extends the results
in \cite{Y3}. The unique solvability of adapted solutions of BSVIEs
in $L^p$ $(p>1)$ is also considered, which also generalize the
results in \cite{L} and \cite{WS}.
\par  $\textit{Keywords:}$ Backward stochastic Volterra integral
equations, M-solutions, $L^{p}$ solutions, adapted solutions
\end{abstract}



\section{Introduction}\label{sec:intro}
In this paper, we are concerned with backward stochastic Volterra
integral equation (BSVIE for short) of the formt, $\in [0,T],$
\begin{eqnarray}
Y(t)=\psi
(t)+\int_t^Tg(t,s,Y(s),Z(t,s),Z(s,t))ds-\int_t^TZ(t,s)dW(s),
\end{eqnarray}
where $W$ is a standard Brownian motion with values in $R^d$ defined
on some complete probability space $(\Omega ,\mathcal{F},P)$, $\psi
(\cdot )$ is the terminal condition and $g$ is the coefficient (also
called the generator). The unknowns are the processes $(Y(\cdot ),Z(\cdot ,\cdot ))\in {\mathcal{H}}%
^p[0,T]$ (defined blew), for which $(Y(\cdot ),Z(t,\cdot ))$ is $\Bbb{F}$%
-adapted for all $t\in [0,T].$ Here $\{{\mathcal{F}}_{t}\}_{t\geq
0}$ is the augmented natural filtration of $W$ which satisfies the
usual conditions.

Lin \cite{L} firstly considered the solvability of the adapted
solution of the form
\begin{eqnarray}
Y(t)=\xi +\int_t^Tg(t,s,Y(s),Z(t,s))ds-\int_t^TZ(t,s)dW(s),\quad
t\in [0,T].
\end{eqnarray}
As to the general form (1), Yong (\cite{Y1} and \cite{Y3}) firstly
studied them and gave the application in optimal control. One can
also see \cite{WS} for more detailed accounts on BSVIEs (1). Both of
them are natural generalization of backward stochastic differential
equation (BSDE for short) of the form
\[
Y(t)=\xi +\int_t^Tg(s,Y(s),Z(s))ds-\int_t^TZ(s)dW(s),\quad t\in
[0,T],
\]
which was firstly introduced by Pardoux and Peng \cite{PP1}. In
order to obtain a stochastic maximum principle for optimal control
of stochastic Volterra integral equation, Yong \cite{Y3} introduced
the notion
of M-solution and proved the existence and uniqueness of M-solution in $%
{\mathcal{H}}^2[0,T].$ On the other hand, Yong \cite{Y2} adopted the
so-called BSVIEs to construct a class of dynamic convex and coherent
risk measures, which is different from the result in the case of
BSDE in Rosazza \cite{R2}. We would like to mention that Wang and
Zhang \cite{WZ} gave existence and uniqueness of the adapted
solution of BSVIE (2) with jumps under non-Lipschitz condition and
Ren considered the similar result in hilbert space in \cite{R1}. See
Aman \cite{A} for result of BSVIE (2) under local Lipschitz
condition. All of them considered the unique solvability of BSVIEs
when $p=2$.

In this paper, for mathematical interest, we try to get the
existence and uniqueness of $L^p$ $(1<p<2)$ solutions for BSVIEs. In
BSDEs case, we have to mention that El Karoui et al. \cite{EPQ}
obtained the existence and uniqueness of the solution for BSDEs when
the generator is uniformly Lipschitz, the data $\xi $ and
$\{g(s,0,0)\}$ are in $L^p$ $(1<p<2).$ Briand and Carmona \cite{BC}
considered the $L^p$ solution for BSDEs with polynomial growth
generators and in Briand et al. \cite{BDHPS} generalized the result.
Recently, Wang et al. \cite{WRC} also studied the $L^p$ solutions
when the uniform Lipschitz condition was replaced by nonnegative
adapted process. However, neither of the above can be applied in
BSVIEs case, and we have to deal with this problem with new method.
Inspired by the four steps to solve the existence and uniqueness of
M-solution in ${\mathcal{H}}^2[0,T]$ in \cite{Y3}, we will use the
similar method to deal with the situation for M-solution in
${\mathcal{H}}^p[0,T]$. Similarly we can
also get the result of adapted solutions for (1) when $g$ is independent of $%
Z(s,t).$

 The paper is organized as follows. In Section 2, we will present
 some notations, definition and some propositions. In Section 3, we
 will study the existence and uniqueness of M-solution of (1) and
 adapted solution of (1) (the generator is independent of $Z(s,t)$) respectively.

\section{Preliminary}
 In this section, we
will present some necessary notations, definitions and some
propositions needed in the sequel. In the following we denote
$\Delta^c=\Delta^c[0,T]$ and $\Delta ^c[R,S]=\{(t,s)\in [R,S]^2,$
$t\leq s\}$ where $R,$ $S\in [0,T].$ Let
$L^p_{\mathcal{F}_{T}}[R,S]$
be the set of ${\mathcal{B}}[R,S]\times \mathcal{F}_T$-measurable processes $%
\psi :\Omega \times [R,S]\rightarrow R^m$ such that $E\int_R^S|\psi
(t)|^pdt<\infty .$ $L_{\Bbb{F}}^p[R,S]$ is set of adapted processes $%
X:\Omega \times [R,S]\rightarrow R^m$ such that
$E\int_R^S|X(t)|^pdt<\infty . $

We denote $$\Bbb{H}^p[R,S]=L^p_{\Bbb{F}}(\Omega; C[R,S])\times
L^p_{\Bbb{F}}[R,S],$$ which is a Banach space under the norm
\begin{eqnarray*}
\Vert (y(\cdot ),z(\cdot ))\Vert _{\Bbb{H}^p[R,S]}=\left[
E\sup_{t\in [R,S]}|y(t)|^p+E\left(\int_R^S|z(t)|^2dt\right)^{\frac p
2}\right] ^{\frac 1 p},
\end{eqnarray*}
where $L^p_{\Bbb{F}}(\Omega; C[R,S])$ is set of all continuous
adapted processes $X :[R,S]\times\Omega\rightarrow R^m$ such that
$E\left[\sup\limits_{t\in [R,S]}|X(t)|^p\right] <\infty.$

$L^p(R,S;L_{\Bbb{F}}^2[R,S])$ the set of processes $Z:\Omega \times
[R,S]\times [R,S]\rightarrow R^{m\times d}$ such that for almost
every $t\in
[R,S],$ $Z(t,\cdot )$ is $\Bbb{F}$-progressively measurable and $$%
E\int_R^S\left( \int_R^S|Z(t,s)|^2ds\right) ^{\frac p2}dt<\infty .$$

$%
L^p(R,S;L_{\Bbb{F}}^2[t,T])$ the set of processes $Z:\Omega \times
\Delta^c \rightarrow R^{m\times d}$ such that for almost every $t\in [R,S],$ $%
Z(t,\cdot )$ is $\Bbb{F}$-progressively measurable and
$$E\int_R^S\left( \int_t^T|Z(t,s)|^2ds\right) ^{\frac p2}dt<\infty
.$$ We denote
\begin{eqnarray*}
{\mathcal{H}}^p[R,S] &=&L_{\Bbb{F}}^p[R,S]\times
L^p(R,S;L_{\Bbb{F}}^2[R,S]),
\\
{\mathcal{H}}_0^p[R,S] &=&L^p[R,S]\times
L^p(R,S;L_{\Bbb{F}}^2[t,S]).
\end{eqnarray*}
We also need the following two definitions.
\begin{definition}
Let $S\in [0,T]$. A pair of $(Y(\cdot ),Z(\cdot ,\cdot ))\in {\mathcal{H}}%
^p[S,T]$ is called an adapted $M$-solution of BSVIE (1) on $[S,T]$
if (1) holds in the usual It\^o's sense for almost all $t\in [S,T]$
and, in addition, the following holds:
\[
Y(t)=E^{\mathcal{F}_S}Y(t)+\int_S^tZ(t,s)dW(s),\quad t\in [S,T].
\]
\end{definition}
\begin{definition}
A pair of $(Y(\cdot ),Z(\cdot ,\cdot ))\in {\mathcal{H}}_0^p[0,T]$
is called an adapted solution of the following simple BSVIE (3) if
(3) holds in the usual It\^o's sense,
\begin{equation}
Y(t)=\psi (t)+\int_t^Tg(t,s,Y(s),Z(t,s))ds-\int_t^TZ(t,s)dW(s),\quad
t\in [0,T].
\end{equation}
\end{definition}
Next we will give some propositions, which can be seen in \cite{Y3}.
For any $R,S\in [0,T],$ let us consider the following stochastic
integral equation, $r\in [S,T],$ $t\in [R,T],$
\begin{equation}
\lambda (t,r)=\psi (t)+\int_r^Th(t,s,\mu (t,s))ds-\int_r^T\mu
(t,s)dW(s),
\end{equation}
where $h:[R,T]\times [S,T]\times R^{m\times d}\times \Omega
\rightarrow R^m$ is given. The unknown processes are $(\lambda
(\cdot ,\cdot ),\mu (\cdot ,\cdot ))$,
for which $%
(\lambda (t,\cdot ),\mu (t,\cdot ))$ are $\Bbb{F}$-adapted for all
$t\in [R,T]$. We can regard (4) as a family of BSDEs on $[S,T]$,
parameterized by $t\in [R,T]$. Next we introduce the following
assumption of $h$ in (4).

(H1) Let $R,S\in [0,T]$ and $h:[R,T]\times [S,T]\times R^{m\times
d}\times \Omega \rightarrow R^m$ be ${\mathcal{B}}([R,T]\times
[S,T]\times R^{m\times
d})\otimes \mathcal{F}_T$-measurable such that $s\mapsto h(t,s,z)$ is $%
\Bbb{F}$-progressively measurable for all $(t,z)\in [R,T]\times
R^{m\times d}$ and
\begin{equation}
E\int_R^T\left( \int_S^T|h(t,s,0)|ds\right) ^pdt<\infty .
\end{equation}
Moreover, the following holds:
\begin{equation}
|h(t,s,z_1)-h(t,s,z_2)|\leq L(t,s)|z_1-z_2|,(t,s)\in [R,T]\times
[S,T],z_1,z_2\in R^{m\times d},
\end{equation}
where $L:[R,T]\times [S,T]\rightarrow [0,\infty )$ is a
deterministic function such that for some $\varepsilon >0$,
$$\sup\limits_{t\in [R,T]}\int_S^TL(t,s)^{2+\epsilon }ds<\infty .$$
\begin{proposition}
Let (H1) hold, then for any $\psi (\cdot )\in
L_{\mathcal{F}_T}^p[R,T]$, (4) admits a unique adapted solution
$(\lambda (t,\cdot ),\mu (t,\cdot ))\in \Bbb{H}^p[S,T]$ for almost
all $t\in [R,T].$
\end{proposition}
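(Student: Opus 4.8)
The plan is to view (4), for each fixed $t \in [R,T]$, as a classical BSDE on the interval $[S,T]$ with terminal condition $\psi(t)$ and generator $z \mapsto h(t,s,z)$, and then assemble the family of solutions into an element of the right space with the required integrability in $t$. First I would fix $t$ and apply the standard $L^p$ theory for BSDEs with Lipschitz generators (the El Karoui–Peng–Quenez result cited as \cite{EPQ}): since $\psi(t) \in L^p_{\mathcal{F}_T}$ for a.e.\ $t$ (by Fubini, using $\psi \in L^p_{\mathcal{F}_T}[R,T]$), the Lipschitz constant $s \mapsto L(t,s)$ lies in $L^2[S,T]$ (indeed in $L^{2+\varepsilon}$, which is more than enough after Hölder on the finite interval), and $s \mapsto h(t,s,0)$ satisfies $E(\int_S^T |h(t,s,0)|\,ds)^p < \infty$ for a.e.\ $t$ (again Fubini from (6)), there is a unique adapted solution $(\lambda(t,\cdot),\mu(t,\cdot))$ in $L^p_{\mathbb F}(\Omega; C[S,T]) \times L^p_{\mathbb F}[S,T]$, i.e.\ in $\mathbb H^p[S,T]$, for each such $t$.

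The second step is measurability in the parameter $t$: one must check that $t \mapsto (\lambda(t,\cdot),\mu(t,\cdot))$ can be chosen jointly measurable, so that the integrals in the definition of ${\mathcal H}^p$ make sense. I would obtain this from the Picard iteration used to construct the BSDE solution — each iterate is built from $\psi$ and $h$ by conditional expectations and stochastic integrals, operations that preserve joint measurability in $(t,\omega,s)$ under (H1), and the fixed point is an a.e.\ limit of these iterates, hence jointly measurable. The measurability hypotheses on $h$ and $L$ in (H1) are exactly what is needed here.

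The third and main step is the global $L^p$ estimate that turns the pointwise-in-$t$ bounds into integrated bounds: from the BSDE a priori estimate one has, for a.e.\ $t$,
\begin{equation*}
E\sup_{r\in[S,T]}|\lambda(t,r)|^p + E\Bigl(\int_S^T|\mu(t,s)|^2\,ds\Bigr)^{p/2} \le C_t\,\Bigl[\,E|\psi(t)|^p + E\Bigl(\int_S^T|h(t,s,0)|\,ds\Bigr)^p\Bigr],
\end{equation*}
and the delicate point is that the constant $C_t$ must be controlled uniformly in $t$: it depends only on $p$ and on $\int_S^T L(t,s)^2\,ds$ (or on $\|L(t,\cdot)\|_{L^{2+\varepsilon}}$ together with $|T-S|$), which by the hypothesis $\sup_{t\in[R,T]}\int_S^T L(t,s)^{2+\varepsilon}\,ds < \infty$ is bounded in $t$. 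Hence $C_t \le C$ uniformly, and integrating over $t \in [R,T]$ and invoking (5) and $\psi \in L^p_{\mathcal F_T}[R,T]$ gives
\begin{equation*}
E\int_R^T\Bigl(\int_S^T|\mu(t,s)|^2\,ds\Bigr)^{p/2}dt + E\int_R^T\sup_{r\in[S,T]}|\lambda(t,r)|^p\,dt < \infty,
\end{equation*}
which is the asserted membership in the relevant ${\mathcal H}^p$-type space. Uniqueness is immediate: two adapted solutions agree for a.e.\ $t$ by BSDE uniqueness on $[S,T]$. I expect the uniformity of the BSDE constant in the parameter $t$ — i.e.\ tracking precisely how the $L^p$ estimate depends on the Lipschitz function $L(t,\cdot)$ rather than on a single Lipschitz constant — to be the real technical obstacle; this is presumably why (H1) demands $L^{2+\varepsilon}$-control rather than merely $L^2$, giving room for a Hölder/John–Nirenberg type argument or an exponential weighting that still yields a $t$-independent bound.
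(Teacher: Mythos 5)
The paper never proves Proposition 2.1 — it is simply quoted from \cite{Y3} ("which can be seen in \cite{Y3}") — so there is no in-paper argument to compare against; your treatment of (4) as a family of BSDEs on $[S,T]$ parameterized by $t\in[R,T]$, solved pointwise in $t$ and then assembled via a uniform-in-$t$ a priori estimate plus joint measurability of the Picard iterates, is exactly the route the cited source takes (there for $p=2$) and is correct in outline. Two points to tighten: for $1<p<2$ the relevant $L^p$ BSDE theory is \cite{BDHPS} rather than \cite{EPQ}, and neither reference is stated with an $s$-dependent Lipschitz coefficient $L(t,s)$, so the fixed-point and a priori estimates must be redone by hand; this is precisely where the $L^{2+\varepsilon}$ hypothesis enters, since by H\"older $\int_r^T L(t,s)^2\,ds\le (T-r)^{\varepsilon/(2+\varepsilon)}\bigl(\sup_{t}\int_S^T L(t,s)^{2+\varepsilon}\,ds\bigr)^{2/(2+\varepsilon)}$, which is small uniformly in $t$ on a short subinterval, giving a contraction there, and the subinterval solutions stitch together because $h$ does not depend on $\lambda$. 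You correctly identified both the uniformity-in-$t$ issue and the role of the $2+\varepsilon$ exponent, so apart from carrying out that contraction explicitly (which the paper itself also omits), the proposal is sound.
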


Now we look at one special case of (4). Let $R=S$ and define
\begin{equation}
\left\{
\begin{array}{lc}
Y(t)=\lambda (t,t), & t\in [S,T], \\
Z(t,s)=\mu (t,s), & (t,s)\in \Delta ^c[S,T].%
\end{array}
\right.
\end{equation}
Then the above (4) reads:
\begin{equation}
Y(t)=\psi (t)+\int_t^Th(t,s,Z(t,s))ds-\int_t^TZ(t,s)dW(s),\quad t\in
[S,T].
\end{equation}

Here we define $Z(t,s)$ for $(t,s)\in \Delta[S,T]$ by
$Y(t)=EY(t)+\int_0^tZ(t,s)dW(s)$. So we have,
\begin{proposition}
Let (H1) hold, then for any $S\in [0,T],$ $\psi (\cdot )\in L_{\mathcal{F}%
_T}^p[S,T]$, (8) admits a unique adapted M-solution $(Y(\cdot
),Z(\cdot ,\cdot ))\in {\mathcal{H}}^p[S,T]$ (adapted solution
$(Y(\cdot ),Z(\cdot ,\cdot ))\in {\mathcal{H}}^p_0[S,T]$,
respectively).
If $\overline{h}$ also satisfies (H1), $\overline{\psi }(\cdot )\in L_{%
{\mathcal{F}}_{T}}^{p}[S,T],$ and $(\overline{Y}(\cdot
),\overline{Z}(\cdot ,\cdot ))\in {\mathcal{H}}^{p}[S,T]$ is the
unique adapted M-solution of BSVIE
(8) with $(h,\psi )$ replaced by $(\overline{h},\overline{\psi }),$ then $%
\forall t\in \lbrack S,T],$
\begin{eqnarray}
&&\ E\left\{ |Y(t)-\overline{Y}(t)|^{p}+\left(\int_{t}^{T}|Z(t,s)-\overline{Z}%
(t,s)|^{2}ds\right)^{\frac p 2}\right\}  \nonumber \\
\  &\leq &CE\left[ |\Psi (t)-\overline{\Psi }(t)|^{p}+\left(
\int_{t}^{T}|h(t,s,Z(t,s))-\overline{h}(t,s,Z(t,s))|ds\right)
^{p}\right] .
\end{eqnarray}%
Hereafter $C$ is a generic positive constant which may be different
from line to line.
\end{proposition}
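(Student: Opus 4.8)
The plan is to exploit the special structure of BSVIE (8): since its generator $h(t,s,Z(t,s))$ does not involve $Y(s)$, the correspondence (7) makes (8) coincide with the family (4) of stochastic integral equations with $R=S$, i.e. one genuine BSDE on $[S,T]$ for each fixed parameter $t\in[S,T]$, with no coupling between different values of $t$. So Proposition 2.1 gives, for almost every $t$, a unique pair $(\lambda(t,\cdot),\mu(t,\cdot))\in\Bbb{H}^p[S,T]$ solving (4). First I would pick a version of $\lambda$ jointly measurable in $(t,r,\omega)$, which is possible because $(t,\omega)\mapsto(\psi(t),h(t,\cdot,\cdot))$ is measurable and the solution map of (4) depends Lipschitz-continuously on its data through the $L^p$ estimate for BSDEs (parameter frozen). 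Then I set $Y(t)=\lambda(t,t)$ and $Z(t,s)=\mu(t,s)$ for $(t,s)\in\Delta^c[S,T]$; since $\lambda(t,\cdot)$ is $\Bbb{F}$-adapted, $Y(t)=\lambda(t,t)$ is $\mathcal{F}_t$-measurable, so $(Y,Z)$ is the adapted solution in ${\mathcal{H}}_0^p[S,T]$. For the $M$-solution I would extend $Z(t,\cdot)$ to $[S,t]$ by applying the martingale representation theorem to the $L^p$ martingale $s\mapsto E^{\mathcal{F}_s}Y(t)$, so that $Y(t)=E^{\mathcal{F}_S}Y(t)+\int_S^tZ(t,s)dW(s)$ holds as required by Definition 2.1.

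Next I would verify that $(Y,Z)$ lies in the asserted spaces. On $\Delta^c[S,T]$ this follows by integrating in $t$ the $\Bbb{H}^p[S,T]$-bound for $(\lambda(t,\cdot),\mu(t,\cdot))$ coming from the $L^p$ estimate for BSDEs, whose right-hand side is $CE\big[|\psi(t)|^p+(\int_S^T|h(t,s,0)|ds)^p\big]$, which is integrable in $t$ by $\psi\in L^p_{\mathcal{F}_T}[S,T]$ and (5). On $\Delta[S,T]$ the extra term $E\int_S^T(\int_S^t|Z(t,s)|^2ds)^{p/2}dt$ is controlled, via Burkholder--Davis--Gundy and conditional Jensen applied to $Y(t)-E^{\mathcal{F}_S}Y(t)=\int_S^tZ(t,s)dW(s)$, by $CE\int_S^T|Y(t)|^pdt<\infty$. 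Measurability of $Y$ and of the extended $Z$ is inherited from the jointly measurable version above together with the measurability of martingale representations in the parameter.

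For uniqueness I would take an arbitrary adapted $M$-solution $(Y,Z)\in{\mathcal{H}}^p[S,T]$ of (8) and, for almost every $t$, put $\lambda(t,r):=\psi(t)+\int_r^Th(t,s,Z(t,s))ds-\int_r^TZ(t,s)dW(s)$ for $r\in[t,T]$; this exhibits $(\lambda(t,\cdot),Z(t,\cdot))$ on $[t,T]$ as an adapted solution of the BSDE in Proposition 2.1 (the fixed-$t$ slice of (8), which still satisfies (H1) after restriction), so the uniqueness half of Proposition 2.1 forces $Z(t,\cdot)$ on $[t,T]$ and $Y(t)=\lambda(t,t)$, and then $Z(t,\cdot)$ on $[S,t]$ is pinned down by the uniqueness of the martingale representation in the $M$-solution constraint. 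The same argument gives uniqueness of the adapted solution in ${\mathcal{H}}_0^p[S,T]$, dropping the last step.

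Finally, for the stability estimate (10) I would fix a $t\in[S,T]$ for which both equations hold, subtract the two BSDEs on $[t,T]$, and apply the $L^p$ a priori estimate for BSDEs to the difference $(\lambda-\overline\lambda,\,Z(t,\cdot)-\overline Z(t,\cdot))$: its terminal value is $\psi(t)-\overline\psi(t)$ and its driver at $s$ is $h(t,s,Z(t,s))-\overline h(t,s,\overline Z(t,s))=[h(t,s,Z(t,s))-\overline h(t,s,Z(t,s))]+[\overline h(t,s,Z(t,s))-\overline h(t,s,\overline Z(t,s))]$, so the difference solves a BSDE whose driver is $\overline L(t,s)$-Lipschitz in the unknown and whose value at $0$ is $h(t,s,Z(t,s))-\overline h(t,s,Z(t,s))$; the $L^p$ estimate then bounds $E\big[\sup_r|\lambda(t,r)-\overline\lambda(t,r)|^p+(\int_t^T|Z(t,s)-\overline Z(t,s)|^2ds)^{p/2}\big]$ by the right-hand side of (10), and evaluating at $r=t$ yields (10) exactly. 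Granting Proposition 2.1, the remaining obstacle is really the bookkeeping: keeping measurability of $Y$ and of the extended $Z$ under control, and checking that the estimate needed in (10) is exactly the one Proposition 2.1 supplies. The deeper difficulty --- the $L^p$ (not $L^2$) a priori estimate for BSDEs with the time-dependent Lipschitz coefficient $L(t,s)$, for which the hypothesis $\sup_t\int_S^TL(t,s)^{2+\epsilon}ds<\infty$ is precisely calibrated, in the spirit of El Karoui, Peng and Quenez \cite{EPQ} --- is already packaged in Proposition 2.1, so I would simply quote it in the form needed.
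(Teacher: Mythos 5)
Your proposal is correct and follows essentially the route the paper intends: the paper states Proposition 2.2 without proof (citing Yong's work), but its own setup in Section 2 --- viewing (8) as the diagonal specialization (7) of the parameterized BSDE family (4), invoking Proposition 2.1 slice by slice, and extending $Z(t,\cdot)$ to $s<t$ by martingale representation to get the M-solution --- is exactly what you carry out, including the standard subtraction argument for the stability estimate (9). The only imprecision is your claim that the $L^p$ a priori estimate needed for (9) is ``packaged in Proposition 2.1'': as stated, that proposition gives only existence and uniqueness, so the stability step really rests on the standard $L^p$ ($1<p<2$ included) a priori estimate for Lipschitz BSDEs with the coefficient $L(t,s)$, which must be quoted separately (as the paper implicitly does via \cite{Y3}).
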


Let's give another special case. Let $r=S\in [R,T]$ be fixed. Define
\begin{eqnarray*}
\psi ^S(t)=\lambda (t,S),\quad Z(t,s)=\mu (t,s), \quad t\in [R,S],
s\in [S,T].
\end{eqnarray*}
Then (4) becomes: $t\in [R,S],$
\begin{equation}
\psi ^S(t)=\psi (t)+\int_S^Th(t,s,Z(t,s))ds-\int_S^TZ(t,s)dW(s),
\end{equation}
and we have the following result.
\begin{proposition}
Let (H1) hold, then for any $\psi (\cdot )\in
L_{\mathcal{F}_T}^p[R,S]$, $(10)$ admits a unique adapted solution
$(\psi ^S(\cdot ),Z(\cdot ,\cdot ))\in
L_{\mathcal{F}_S}^p[R,S]\times L^p(R,S;L_{\Bbb{F}}^2[S,T])$.
\end{proposition}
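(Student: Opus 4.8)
The plan is to obtain this proposition as a direct consequence of Proposition 2.1, by viewing (10), for each fixed $t$, as a BSDE on the interval $[S,T]$ parameterized by $t\in[R,S]$. First I would extend $\psi(\cdot)$ by $0$ on $(S,T]$ so that $\psi(\cdot)\in L^p_{\mathcal{F}_T}[R,T]$ (alternatively one simply reruns the proof of Proposition 2.1 with the $t$-range $[R,S]$), and note that the restriction of $h$ to $[R,S]\times[S,T]\times R^{m\times d}\times\Omega$ still satisfies (H1). Proposition 2.1 applied to (4) then produces, for a.e.\ $t\in[R,S]$, a unique adapted solution $(\lambda(t,\cdot),\mu(t,\cdot))\in\Bbb{H}^p[S,T]$ of (4), i.e.\ of the BSDE on $[S,T]$ with terminal value $\psi(t)$, generator $h(t,s,\cdot)$ and driver $W$. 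Setting $\psi^S(t):=\lambda(t,S)$ and $Z(t,s):=\mu(t,s)$ for $(t,s)\in[R,S]\times[S,T]$ and evaluating (4) at $r=S$ shows that $(\psi^S(\cdot),Z(\cdot,\cdot))$ solves (10); the adaptedness of $r\mapsto\lambda(t,r)$ on $[S,T]$ gives that $\psi^S(t)$ is $\mathcal{F}_S$-measurable, while joint measurability in the parameter $t$ is inherited from the construction underlying Proposition 2.1 (whose Picard iterates are jointly measurable and converge in the relevant $L^p(dt)$-norm).

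The next step is to verify that $(\psi^S(\cdot),Z(\cdot,\cdot))$ indeed lies in $L^p_{\mathcal{F}_S}[R,S]\times L^p(R,S;L^2_{\Bbb{F}}[S,T])$. For this I would invoke the $L^p$ a priori estimate for the BSDE (4) (established while proving Proposition 2.1, and analogous to (9)):
\[
E\sup_{r\in[S,T]}|\lambda(t,r)|^p+E\Big(\int_S^T|\mu(t,s)|^2ds\Big)^{\frac p2}\le C\,E\Big[|\psi(t)|^p+\Big(\int_S^T|h(t,s,0)|ds\Big)^p\Big],
\]
where, crucially, the constant $C$ depends only on $p$, $T-S$ and $\sup_{t\in[R,T]}\int_S^TL(t,s)^{2+\epsilon}ds$, all finite by (H1); this uniformity in $t$ is precisely what (H1) is designed to provide. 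Integrating over $t\in[R,S]$ and using $\psi(\cdot)\in L^p_{\mathcal{F}_T}[R,S]$ together with (5) (valid on $[R,S]\subseteq[R,T]$) gives $E\int_R^S|\psi^S(t)|^pdt<\infty$ and $E\int_R^S(\int_S^T|Z(t,s)|^2ds)^{p/2}dt<\infty$, which is the required membership.

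For uniqueness, suppose $(\psi^S,Z)$ and $(\overline{\psi}^S,\overline{Z})$ are two adapted solutions of (10) in the stated space. For a.e.\ fixed $t\in[R,S]$ I would set $\lambda(t,r):=\psi^S(t)+\int_S^rh(t,s,Z(t,s))ds-\int_S^rZ(t,s)dW(s)$ for $r\in[S,T]$; substituting the identity (10) shows that $(\lambda(t,\cdot),Z(t,\cdot))$ solves (4) on $[S,T]$, and a Burkholder--Davis--Gundy estimate together with (7), the finiteness of $\int_S^TL(t,s)^2ds$, (5), and the assumed integrability of $\psi^S$ and $Z$ shows that $(\lambda(t,\cdot),Z(t,\cdot))\in\Bbb{H}^p[S,T]$ for a.e.\ $t$. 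The uniqueness part of Proposition 2.1 then forces $Z(t,\cdot)=\overline{Z}(t,\cdot)$ for a.e.\ $t$, whence $\psi^S(t)=\lambda(t,S)=\overline{\psi}^S(t)$.

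The steps requiring genuine care, as opposed to routine verification, are the joint measurability of the family $\{(\psi^S(t),Z(t,\cdot))\}_{t\in[R,S]}$ in the parameter $t$, and the fact that the constant in the $L^p$ estimate for the parameterized BSDEs can be chosen independently of $t$; the latter is exactly why (H1) imposes $\sup_{t}\int_S^TL(t,s)^{2+\epsilon}ds<\infty$ rather than merely $\int_S^TL(t,s)^2ds<\infty$ for each $t$. Everything else is a transcription of the single-parameter $L^p$ BSDE theory already packaged in Proposition 2.1.
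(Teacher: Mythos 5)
Your proposal takes essentially the same route as the paper, which states this proposition without a detailed proof (citing \cite{Y3}) and obtains (10) precisely as the specialization of the parameterized family (4) at $r=S$ with $\psi^S(t)=\lambda(t,S)$ and $Z(t,s)=\mu(t,s)$; your appeal to Proposition 2.1 together with the uniform-in-$t$ $L^p$ estimates and the measurability-in-$t$ discussion is the same argument, merely spelled out. One small slip: in the uniqueness step the reconstruction should be $\lambda(t,r)=\psi^S(t)-\int_S^r h(t,s,Z(t,s))\,ds+\int_S^r Z(t,s)\,dW(s)$ rather than with your signs, since then (10) gives $\lambda(t,T)=\psi(t)$ and $(\lambda(t,\cdot),Z(t,\cdot))$ indeed satisfies (4); with that sign corrected the argument stands.
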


\section{L$^p$ solution for BSVIE}
In this section, we will make use of the above propositions to give
the unique existence of M-solution of (1) and adapted solutions of
(3). First we assume,

 (H2) Let $g:\Delta ^c\times R^m\times
R^{m\times d}\times R^{m\times d}\times \Omega \rightarrow R^m$ be
${\mathcal{B}}(\Delta ^c\times R^m\times R^{m\times d}\times
R^{m\times d})\otimes
\mathcal{F}_T$-measurable such that $s\rightarrow g(t,s,y,z,\zeta )$ is $%
\Bbb{F}$-progressively measurable for all $(t,y,z,\zeta )\in
[0,T]\times R^m\times R^{m\times d}\times R^{m\times d}$ and
\[
E\int_0^T\left( \int_t^T|g_0(t,s)|ds\right) ^pdt<\infty,
\]
where $g_0(t,s)=g(t,s,0,0,0).$ Moreover, it holds
\[
|g(t,s,y,z,\zeta )-g(t,s,\overline{y},\overline{z},\overline{\zeta
})|\leq
L_1(t,s)|y-\overline{y}|+L_2(t,s)|z-\overline{z}|+L_3(t,s)|\zeta -\overline{%
\zeta }|,
\]
$\forall (t,s)\in \Delta ^c$, $y,\overline{y}\in R^m$,
$z,\overline{z},\zeta ,\overline{\zeta }\in R^{m\times d},$ a.s.
where $L_i:\Delta ^c\rightarrow R$ is a deterministic function such
that the following holds:
\[
\sup\limits_{t\in [0,T]}\int_0^tL_1^p(s,t)ds<\infty ,\quad %
\sup\limits_{t\in [0,T]}\int_t^TL_2(t,s)^{2+\epsilon }ds<\infty
,\sup\limits_{t\in [0,T]}\int_0^tL_3^{\frac{2p}{2-p}}(s,t)ds<\infty
,
\]
where $\varepsilon \ $is a positive constant, and $p\in (1,2).$
\begin{theorem}
Let (H2) hold, then for any $\psi (\cdot )\in
L_{\mathcal{F}_T}^p[0,T]$, (1) admits a unique adapted M-solution in
${\mathcal{H}}^p[0,T],$ where $p\in (1,2).$
\end{theorem}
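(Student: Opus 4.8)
The plan is to adapt the four-step construction of the $\mathcal H^{2}$-M-solution from \cite{Y3} to the $L^{p}$-setting, reducing everything to the family of parametrized BSDEs handled by Propositions 2.1--2.3, building a contraction on a short time interval, and then patching. First I fix a length $\delta>0$ — to be chosen later in terms only of $T$, $p$, and the three structural quantities $\sup_{t}\int_{0}^{t}L_{1}^{p}(s,t)\,ds$, $\sup_{t}\int_{t}^{T}L_{2}(t,s)^{2+\varepsilon}\,ds$, $\sup_{t}\int_{0}^{t}L_{3}^{2p/(2-p)}(s,t)\,ds$ — together with a partition $0=T_{0}<T_{1}<\dots<T_{N}=T$ with $T_{i}-T_{i-1}\le\delta$. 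On an interval $[S,T]$ with $T-S\le\delta$ I define a map $\Phi$ on $\mathcal H^{p}[S,T]$ as follows: given $(y,z)\in\mathcal H^{p}[S,T]$, freeze the generator by setting $h(t,s,\zeta):=g(t,s,y(s),\zeta,z(s,t))$; using (H2) with $y\in L^{p}_{\Bbb F}[S,T]$ and $z\in L^{p}(S,T;L^{2}_{\Bbb F}[S,T])$ one checks that $h$ satisfies (H1) — the $\zeta$-Lipschitz constant being $L_{2}$, so the $2+\varepsilon$-clause of (H1) is exactly the second clause of (H2), and $E\int_{S}^{T}(\int_{S}^{T}|h(t,s,0)|\,ds)^{p}\,dt<\infty$ follows from the integrability of $g_{0}$ and the $L_{1}$- and $L_{3}$-bounds. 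Proposition 2.2 then yields, for a.e.\ $t$, the unique adapted M-solution $(Y(t),Z(t,\cdot))$ of $Y(t)=\psi(t)+\int_{t}^{T}h(t,s,Z(t,s))\,ds-\int_{t}^{T}Z(t,s)\,dW(s)$, with $Z(t,\cdot)$ extended to $[S,t]$ through $Y(t)=E^{\mathcal F_{S}}Y(t)+\int_{S}^{t}Z(t,s)\,dW(s)$; put $\Phi(y,z)=(Y,Z)$. A fixed point of $\Phi$ is precisely an adapted M-solution of (1) on $[S,T]$.

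For the contraction I would apply estimate (9) of Proposition 2.2 to the two parametrized BSDEs associated with inputs $(y,z)$ and $(\bar y,\bar z)$ (common terminal datum $\psi$, generators differing only through $L_{1}(t,s)|y(s)-\bar y(s)|$ and $L_{3}(t,s)|z(s,t)-\bar z(s,t)|$ since both are evaluated at the first solution), obtaining for a.e.\ $t$ a bound on $E[|Y(t)-\bar Y(t)|^{p}+(\int_{t}^{T}|Z(t,s)-\bar Z(t,s)|^{2}ds)^{p/2}]$ by $CE(\int_{t}^{T}[L_{1}(t,s)|y(s)-\bar y(s)|+L_{3}(t,s)|z(s,t)-\bar z(s,t)|]\,ds)^{p}$. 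Integrating over $t\in[S,T]$: for the $L_{1}$-term I would use Jensen's inequality (the split $L_{1}=L_{1}^{1/p'}\cdot L_{1}^{1/p}$) to keep the power $p$ on $L_{1}$ and then Fubini, dominating it by $C(\sup_{t}\int_{t}^{t+\delta}L_{1}\,ds)^{p-1}(\sup_{s}\int_{0}^{s}L_{1}(t,s)\,dt)\,E\int_{S}^{T}|y(s)-\bar y(s)|^{p}ds$, whose first factor tends to $0$ with $\delta$ and whose second is finite by the $L_{1}^{p}$-clause of (H2); for the anticipating $L_{3}$-term I would apply Cauchy--Schwarz in $s$ and then Hölder at the exponent $\tfrac{2p}{2-p}$, which pulls out $(\sup_{t}\int_{t}^{t+\delta}L_{3}^{2}ds)^{p/2}\le C\delta^{p-1}$, and control the residual $Z$-factor by exploiting the M-solution (martingale-representation) identity together with the $L^{p}$-Burkholder--Davis--Gundy inequality. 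The below-diagonal component $E\int_{S}^{T}(\int_{S}^{t}|Z(t,s)-\bar Z(t,s)|^{2}ds)^{p/2}dt$ of $\|\Phi(y,z)-\Phi(\bar y,\bar z)\|_{\mathcal H^{p}[S,T]}^{p}$ is likewise dominated by $CE\int_{S}^{T}|Y(t)-\bar Y(t)|^{p}dt$ via BDG and Doob applied to the M-identity, hence is already absorbed. Since $C$ depends only on the structural data, $\delta$ can be chosen so that the total contraction constant is $<1$, and the Banach fixed point theorem gives the unique adapted M-solution of (1) on $[S,T]$.

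The patching is standard: proceeding backwards, suppose the M-solution is known on $[T_{i},T]$, so $Y(s)$ is determined for $s\ge T_{i}$ and $Z(s,\cdot)$ can be redefined on all of $[0,s]$ via its martingale representation. For $t\in[T_{i-1},T_{i}]$, equation (1) reads, after splitting $\int_{t}^{T}=\int_{t}^{T_{i}}+\int_{T_{i}}^{T}$, as a BSVIE on $[T_{i-1},T_{i}]$ whose coefficients incorporate the already-built solution on $[T_{i},T]$, the still-unknown upper block $Z(t,s)$ with $t\le T_{i}\le s$ being solved simultaneously inside the parametrized BSDE on $[t,T]$; one checks the modified data still satisfy the hypotheses of the previous two paragraphs (in particular the new terminal-type term lies in $L^{p}_{\mathcal F_{T}}[T_{i-1},T_{i}]$), so the same contraction produces the unique M-solution on $[T_{i-1},T_{i}]$, with the conditioning in Definition 2.1 treated as in \cite{Y3}. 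After $N$ steps one obtains an M-solution of (1) in $\mathcal H^{p}[0,T]$, and uniqueness propagates downward interval by interval.

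The main obstacle is the contraction estimate in the regime $p<2$: without the Itô isometry the entire argument must be routed through the $L^{p}$-stability estimate (9) and $L^{p}$-BDG, and the anticipating term $Z(s,t)$ can be tamed only by combining the M-solution structure with a Hölder/Fubini argument finely tuned to the integrability indices of (H2) — the exponents $p$ on $L_{1}$, $2+\varepsilon$ on $L_{2}$, and $\tfrac{2p}{2-p}$ on $L_{3}$ are precisely what make the three pieces of the contraction close and force the contraction constant to vanish as $\delta\downarrow0$. Checking that the modified coefficients in the patching step still satisfy (H1)/(H2) is the remaining, more routine, point requiring attention.
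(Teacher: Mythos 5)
Your proposal follows essentially the same route as the paper's proof of Theorem 3.1: Yong's four-step scheme, with a contraction on the M-subspace of ${\mathcal H}^p$ over a small interval driven by estimate (9) of Proposition 2.2, H\"older at the exponents $p$ and $\tfrac{2p}{2-p}$ for the $L_1$- and $L_3$-terms, the below-diagonal part of $z$ absorbed into $y$ via the martingale-representation identity and the $L^p$ BDG inequalities (12)--(13), and then backward patching (martingale representation plus the Fredholm-type step of Proposition 2.3, which you fold into the parametrized BSDE) and induction. One caveat on the anticipating term: the paper first applies Jensen/H\"older in $s$ (producing the factor $(T-t)^{p/q}$ with the power $p$ on $L_3$), then Fubini, and only then H\"older with the pair $\bigl(\tfrac{2}{2-p},\tfrac{2}{p}\bigr)$, so that the hypothesis $\sup_t\int_0^t L_3^{2p/(2-p)}(s,t)\,ds<\infty$ (integration in the \emph{first} argument) and the M-identity quantity $\int_0^t|z(t,s)|^2\,ds$ come out in the correct variables --- your ``Cauchy--Schwarz in $s$ first'' ordering leaves a factor $\int_t^T|z(s,t)-\overline z(s,t)|^2\,ds$ (and an $L_3^2$-integral in the second argument) that the stated assumptions and the M-identity do not directly control, so that step should be rearranged as in the paper.
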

\begin{proof}We will split the proof into four steps.

Step 1. Choose $S\in [0,T]$ in a manner that we can determine the
unique existence of M-solution $(Y(t),Z(t,s))\in
{\mathcal{H}}^p[0,T]$ for $(t,s)\in [S,T]^2.$ First let
${\mathcal{M}}^p[0,T]$ be the space of all $(y(\cdot ),z(\cdot
,\cdot ))\in {\mathcal{H}}^p[0,T]$ such that
\begin{equation}
y(t)=Ey(t)+\int_0^tz(t,s)dW(s),\quad t\in [0,T].
\end{equation}
Thanks to the martingale moment inequalities in \cite{KS}, we deduce
that,
\begin{equation}
E\int_0^T\left| \int_0^tz(t,s)dW(s)\right| ^pdt\leq
C_pE\int_0^T\left( \int_0^t|z(t,s)|^2ds\right) ^{\frac p2}dt, \quad
 p>0,
\end{equation}
and
\begin{equation}
E\int_0^T\left( \int_0^t|z(t,s)|^2ds\right) ^{\frac p2}dt\leq
C_pE\int_0^T\left| \int_0^tz(t,s)dW(s)\right| ^pdt,\quad  p>1,
\end{equation}
where $C_p$ is a constant depending on $p.$ Thus it is easy to show
that ${\mathcal{M}}^p[0,T]$ is a closed nonempty subspace of
${\mathcal{H}}^p[0,T].$ (11) and (13) imply,
\[
E\int_0^T\left( \int_0^t|z(t,s)|^2ds\right) ^{\frac p2}dt\leq
C_pE\int_0^T|y(t)|^pdt,
\]
thus the following result holds,
\begin{eqnarray*}
&&\ E\int_0^T|y(t)|^pdt+E\int_0^T\left( \int_0^T|z(t,s)|^2ds\right)
^{\frac
p2}dt \\
\ &\leq &C_pE\int_0^T|y(t)|^pdt+C_pE\int_0^T\left(
\int_t^T|z(t,s)|^2ds\right) ^{\frac p2}dt\\
&&+C_pE\int_0^T\left(
\int_0^t|z(t,s)|^2ds\right) ^{\frac p2}dt \\
\ &\leq &C_pE\int_0^T|y(t)|^pdt+C_pE\int_0^T\left(
\int_t^T|z(t,s)|^2ds\right) ^{\frac p2}dt.
\end{eqnarray*}
Therefore, we can introduce another norm for ${\mathcal{M}}^p[0,T]$
as follows:
\[
\left\| ((y(\cdot ),z(\cdot ,\cdot ))\right\|
_{{\mathcal{M}}^p[0,T]}=\left[ E\int_0^T|y(t)|^pdt+E\int_0^T\left(
\int_t^T|z(t,s)|^2ds\right) ^{\frac p2}dt\right] ^{\frac 1p}.
\]
Let us consider the following equation:
\begin{equation}
Y(t)=\psi
(t)+\int_t^Tg(t,s,y(s),Z(t,s),z(s,t))ds-\int_t^TZ(t,s)dW(s),\quad
t\in [S,T]
\end{equation}
for any $\psi (\cdot )\in L_{{\mathcal{F}}_T}^p[S,T]$ and $(y(\cdot
),z(\cdot ,\cdot ))\in {\mathcal{M}}^p[S,T]$. By Proposition 2.2, we
observe that (14) admits a unique
adapted M-solution $(Y(\cdot ),Z(\cdot ,\cdot ))$, and we can define a map $%
\Theta :{\mathcal{M}}^p[S,T]\rightarrow {\mathcal{M}}^p[S,T]$ by
\[
\Theta (y(\cdot ),z(\cdot ,\cdot ))=(Y(\cdot ),Z(\cdot ,\cdot
)),\quad \forall (y(\cdot ),z(\cdot ,\cdot ))\in
{\mathcal{M}}^p[S,T].
\]
Let $(\overline{y}(\cdot ),\overline{z}(\cdot ,\cdot ))\in {\mathcal{M}}%
^p[S,T] $ and $\Theta (\overline{y}(\cdot ),\overline{z}(\cdot ,\cdot ))=(%
\overline{Y}(\cdot ),\overline{Z}(\cdot ,\cdot )).$ Consequently,
(9) gives that
\begin{eqnarray*}
&&E\int_S^T|Y(t)-\overline{Y}(t)|^pdt+E\int_S^T\left( \int_t^T|Z(t,s)-%
\overline{Z}(t,s)|^2ds\right) ^{\frac p2}dt \\
&\leq &CE\int_S^T\left\{ \int_t^T|g(t,s,y(s),Z(t,s),z(s,t))-g(t,s,\overline{y%
}(s),Z(t,s),\overline{z}(s,t))|ds\right\} ^pdt \\
&\leq &CE\int_S^T\left\{
\int_t^TL_1(t,s)|y(s)-\overline{y}(s)|ds\right\} ^pdt
\\
&&+CE\int_S^T\left\{
\int_t^TL_3(t,s)|z(s,t)-\overline{z}(s,t)|ds\right\} ^pdt
\\
&\leq &C\int_S^T(T-t)^{\frac pq}E\int_t^TL_1^p(t,s)|y(s)-y(s)|^pdsdt \\
&&+C\int_S^T(T-t)^{\frac pq}E\int_t^TL_3^p(t,s)|z(s,t)-z(s,t)|^pdsdt \\
&\leq &C(T-S)^{\frac pq}\sup_{t\in [0,T]}\int_0^tL_1^p(s,t)ds\cdot
E\int_S^T|y(s)-y(s)|^pds \\
&&+C(T-S)^{\frac pq}\sup_{t\in [0,T]}\left( \int_0^tL_3^{\frac{2p}{2-p}%
}(s,t)ds\right) ^{\frac{2-p}2}E\int_S^T\left( \int_0^t|z(t,s)-\overline{z}%
(t,s)|^2ds\right) ^{\frac p2}dt \\
&\leq &C(T-S)^{\frac pq}E\int_S^T|y(t)-\overline{y}(t)|^pdt.
\end{eqnarray*}
Then we can choose a constant $\eta =T-S$ so that $C\eta ^{\frac
pq}=\frac 12.$ Hence (14) admits a unique fixed point $(Y(\cdot
),Z(\cdot ,\cdot ))\in {\mathcal{M}}^p[S,T]$ which is the unique
adapted M-solution of equation (1) over $[S,T]$.

Step 2: We can apply the martingale representation theorem to
determine the value of $Z(t,s)$ for $(t,s)\in [S,T]\times [R,S]$
with $0<R<S,$ i.e.,
\[
E^{\mathcal{F}_S}Y(t)=E^{\mathcal{F}_R}Y(t)+\int_R^SZ(t,s)dW(s).
\]

Step 3: We determine the value of $Z(t,s)$ for $(s,t)\in [S,T]\times
[R,S]$ by solving a stochastic Fredholm integral equation, that is,
\begin{equation}
\psi ^S(t)=\psi (t)+\int_S^Tg^S(t,s,Z(t,s))ds-\int_S^TZ(t,s)dW(s),
\end{equation}
for $t\in [R,S],$ where
\[
g^S(t,s,z)=g(t,s,Y(s),z,Z(s,t)),
\]
with $(t,s,z)\in [R,S]\times [S,T]\times
L^p(S,T;L_{\Bbb{F}}^2[R,S]).$ From Proposition 2.3, we know that
(15) has a unique adapted solution $(\psi ^S(\cdot
),Z(\cdot ,\cdot ))\in L_{\mathcal{F}_S}^p[R,S]\times L^p(R,S;L_{\Bbb{F}%
}^2[S,T])$ with $\psi ^S(\cdot )$ being $\mathcal{F}_S$-measurable.

Step 4: We can complete the unique existence of adapted M-solution
by induction. \end{proof}

Let us consider the following BSVIE,
\begin{eqnarray}
Y(t)=\psi (t)+\int_t^Tg(t,s,Y(s),Z(t,s))ds-\int_t^TZ(t,s)dW(s),\quad
t\in [0,T].
\end{eqnarray}
When $\psi (\cdot )=\xi ,$ Lin \cite{L} studied the adapted solution
of (16) and Wang and Zhang \cite{WZ} considered BSVIE (16) with jump
under non-Lipschitz coefficient. We would like to mention the work
of results in Hilbert space in \cite{R1}. Recently, the authors
introduced a notion of S-solution of BSVIE (1) and they considered
the unique existence of adapted
solution by means of S-solution in \cite{WS}. Next we will give a general result in ${\mathcal{H}}%
_0^p[0,T],$ $p\in (1,2),$ which generalizes the above results. We
have,
\begin{theorem}
Let (H2) hold, we assume that,
\begin{eqnarray*}
\sup\limits_{t\in [0,T]}\int_t^TL_1^q(t,s)ds<\infty ,\quad   p>2, \\
\sup\limits_{t\in [0,T]}\int_0^tL_1^p(s,t)ds<\infty ,\quad  1<p<2,
\end{eqnarray*}
where $\frac 1p+\frac 1q=1.$ Then for any $\psi (\cdot )\in L_{\mathcal{F}%
_T}^p[0,T]$, (16) admits a unique adapted solution in
${\mathcal{H}}^p[0,T].$
\end{theorem}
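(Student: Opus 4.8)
The plan is to reproduce the four-step scheme of Theorem 3.1 in a considerably lighter form. Because the generator in (16) carries no anticipating argument $Z(s,t)$, equation (16) lies directly within the scope of Propositions 2.1--2.3; in particular there is no need to work inside the subspace $\mathcal{M}^p$ or to carry out a martingale-representation step. I would therefore construct the adapted solution by first solving (16) on a short terminal interval and then extending it backward in finitely many steps of a fixed length $\eta>0$.

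\textbf{Step 1 (local existence).} Put $S=T-\eta$. On $L^p_{\Bbb{F}}[S,T]$ I define a map $\Theta$ as follows. Given $y(\cdot)$, for each fixed $t\in[S,T]$ regard
\[
Y(t)=\psi(t)+\int_t^Tg(t,s,y(s),Z(t,s))\,ds-\int_t^TZ(t,s)\,dW(s)
\]
as a BSDE on $[t,T]$ of the type (8), with $h(t,s,z)=g(t,s,y(s),z)$. One checks that $h$ satisfies (H1): the Lipschitz constant in $z$ is $L_2(t,s)$, and the free term obeys $E\int_S^T(\int_t^T|g(t,s,y(s),0)|\,ds)^p\,dt<\infty$ since $|g(t,s,y(s),0)|\le|g_0(t,s)|+L_1(t,s)|y(s)|$ and a H\"older inequality against $L_1$ (paired with $y\in L^p$, using whichever of the two hypotheses on $L_1$ is appropriate to the value of $p$) bounds the second term. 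Proposition 2.2 then gives a unique adapted solution $(Y(\cdot),Z(\cdot,\cdot))$ on $[S,T]$, and I set $\Theta(y(\cdot))=Y(\cdot)$. For two inputs $y,\bar y$ the stability estimate (9) (in its adapted-solution version) together with the same H\"older step yields
\[
E\int_S^T|Y(t)-\bar Y(t)|^p\,dt\le C\eta^{\gamma}E\int_S^T|y(t)-\bar y(t)|^p\,dt,
\]
where $\gamma>0$ (explicitly $\gamma=p/q$ when $1<p<2$) and $C$ is controlled by $\sup_t\int_0^tL_1^p(s,t)\,ds$ (respectively by $\sup_t\int_t^TL_1^q(t,s)\,ds$). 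Choosing $\eta$ with $C\eta^{\gamma}\le\frac12$, the contraction principle produces the unique adapted solution of (16) on $[S,T]$.

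\textbf{Step 2 (backward induction).} Suppose the adapted solution has been built on $[S,T]$; to reach $[R,T]$ with $R=S-\eta$, observe that for $t\in[R,S]$ the values $Y(s)$ with $s\in[S,T]$ are already known, so (16) restricted to such a $t$ is again a BSDE on $[t,T]$, now with the frozen generator $\hat g(t,s,y,z)=g(t,s,y,z)\mathbf{1}_{\{s<S\}}+g(t,s,Y(s),z)\mathbf{1}_{\{s\ge S\}}$. Its free term $\hat g(t,s,0,0)$ still meets (H1) because $Y\in L^p_{\Bbb{F}}[S,T]$ by Step 1 and the same H\"older bound against $L_1$ applies; hence Proposition 2.2 (or, if one prefers to split off the $s\ge S$ part as a stochastic Fredholm equation, Proposition 2.3) is applicable. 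Running the contraction argument of Step 1 on $L^p_{\Bbb{F}}[R,S]$ — where the iteration variable enters only through $s\in[R,S]$, so the contraction constant is once more $C\eta^{\gamma}$ — produces $(Y(t),Z(t,\cdot))$ for $t\in[R,S]$, and therefore the unique adapted solution of (16) on $[R,T]$. After $\lceil T/\eta\rceil$ such steps one covers $[0,T]$; uniqueness on $[0,T]$ is inherited from the uniqueness at each step.

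The delicate point throughout is the H\"older estimate turning the $L_1$-Lipschitz contribution into a contraction factor carrying a small power of $\eta$: this is exactly where the two separate hypotheses on $L_1$ (the one for $1<p<2$ and the one for $p>2$) are used, and where the $Z$-component must be kept in hand through the martingale moment inequalities (12)--(14), which is why $p>1$ is required. One must also verify, at every inductive step, that the frozen generator still satisfies the integrability condition (5) of (H1); this rests on the a priori $L^p$-bound for $Y$ already available on the solved intervals. Given Propositions 2.1--2.3 these verifications are routine, but they constitute the real content of the proof.
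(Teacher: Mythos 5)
Your proposal is correct and follows essentially the same route as the paper: a contraction on a short terminal interval built from Proposition 2.2 and the stability estimate (9), with the H\"older step invoking the two separate hypotheses on $L_1$ according to whether $1<p<2$ or $p>2$, followed by backward induction in finitely many steps of length $\eta$. The only cosmetic difference is that you absorb the already-solved part $s\in[S,T]$ into a frozen generator instead of splitting it off as the stochastic Fredholm equation (18) handled by Proposition 2.3 --- an equivalent piece of bookkeeping, which you yourself point out.
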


\begin{proof}We split the proof into several steps.

Step 1: In this step, we will determine the value of $(Y(t),Z(t,s))$
for $(t,s)\in \Delta^c[S,T].$ We consider the following equation:
$t\in [S,T],$
\begin{equation}
Y(t)=\psi (t)+\int_t^Tg(t,s,y(s),Z(t,s))ds-\int_t^TZ(t,s)dW(s),
\end{equation}
for any $\psi (\cdot )\in L_{\mathcal{F}_T}^p[S,T]$ and $(y(\cdot
),z(\cdot ,\cdot ))\in {\mathcal{H}}_0^p[S,T]$. By Proposition 2.2,
we know that (17) admits a unique adapted solution $(Y(\cdot
),Z(\cdot ,\cdot ))\in {\mathcal{H}}_0^p[S,T]
$, and we can define a map $\Theta :{\mathcal{H}}_0^p[S,T]\rightarrow {\mathcal{%
H}}_0^p[S,T]$ by
\[
\Theta (y(\cdot ),z(\cdot ,\cdot ))=(Y(\cdot ),Z(\cdot ,\cdot
)),\quad \forall (y(\cdot ),z(\cdot ,\cdot ))\in
{\mathcal{H}}_0^p[S,T].
\]
Let $(\overline{y}(\cdot ),\overline{z}(\cdot ,\cdot ))\in {\mathcal{H}}%
_0^2[S,T]$ and $\Theta (\overline{y}(\cdot ),\overline{z}(\cdot ,\cdot ))=(%
\overline{Y}(\cdot ),\overline{Z}(\cdot ,\cdot )).$ By (9) we see
that,
\begin{eqnarray*}
&&\ E\int_S^T|Y(t)-\overline{Y}(t)|^pdt+E\int_S^T\left( \int_t^T|Z(t,s)-%
\overline{Z}(t,s)|^2ds\right) ^{\frac p2}dt \\
\  &\leq &CE\int_S^T\left\{ \int_t^T|g(t,s,y(s),Z(t,s))-g(t,s,\overline{y}%
(s),Z(t,s))|ds\right\} ^pdt \\
\  &\leq &CE\int_S^T\left\{
\int_t^TL_1(t,s)|y(s)-\overline{y}(s)|ds\right\} ^pdt.
\end{eqnarray*}
If $p\in (1,2),$ we arrive at,
\begin{eqnarray*}
&&CE\int_S^T\left\{ \int_t^TL_1(t,s)|y(s)-\overline{y}(s)|ds\right\} ^pdt \\
&\leq &C\int_S^T(T-t)^{\frac pq}E\int_t^TL_1^p(t,s)|y(s)-y(s)|^pdsdt \\
&\leq &(T-S)^{\frac pq}C\sup_{t\in [0,T]}\int_0^tL_1^p(s,t)ds\cdot
E\int_S^T|y(t)-y(t)|^pdt,
\end{eqnarray*}
and if $p>2,$
\begin{eqnarray*}
&&CE\int_S^T\left\{ \int_t^TL_1(t,s)|y(s)-\overline{y}(s)|ds\right\} ^pdt \\
&\leq &CE\int_S^T\left( \int_t^TL_1^q(t,s)ds\right) ^{\frac pq}\int_t^T|y(s)-%
\overline{y}(s)|^pds \\
&\leq &C(T-S)\sup_{t\in [0,T]}\left( \int_t^TL_1^q(t,s)ds\right)
^{\frac pq}E\int_S^T|y(t)-\overline{y}(t)|^pdt,
\end{eqnarray*}
where $\frac 1p+\frac 1q=1$. Then we can choose a constant $\eta
=T-S$ so that $C\max \{\eta ^{\frac pq},\eta \}=\frac 12.$ Hence
(17) admits a unique fixed point $(Y(\cdot ),Z(\cdot ,\cdot ))\in
{\mathcal{H}}_0^p[S,T]$ which is the unique adapted solution of
equation (16) over $[S,T]$.

Step 2: We determine the value of $Z(t,s)$ for $(s,t)\in [S,T]\times
[R,S]$ by solving a stochastic Fredholm integral equation, that is,
\begin{equation}
\psi ^S(t)=\psi (t)+\int_S^Tg^{S}(t,s,Z(t,s))ds-\int_S^TZ(t,s)dW(s),
\end{equation}
for $t\in [R,S],$ where $ g^{S}(t,s,z)=g(t,s,Y(s),z).$ From
Proposition 2.3, we know that (18) admits a unique adapted solution
$(\psi ^S(\cdot ),Z(\cdot ,\cdot ))\in L_{\mathcal{F}_S}^p[R,S]\times L^p(R,S;L_{\Bbb{F}}^2[S,T])$ with $\psi ^S(\cdot )$ being $\mathcal{F%
}_S$-measurable.

Step 3: We can complete the unique existence of adapted solution by
induction.
\end{proof}

\end{document}